\documentclass{amsart}
\usepackage[utf8]{inputenc}
\usepackage[utf8]{inputenc}
\usepackage[T1]{fontenc}
\usepackage[all]{xy}
\usepackage{lipsum}
\usepackage{url}
\usepackage{stackrel}

\usepackage{amsmath,amsthm,amssymb}

\usepackage{graphicx}

\newtheorem{theorem}{Theorem}[section]
\newtheorem{lemma}[theorem]{Lemma}
\newtheorem{proposition}[theorem]{Proposition}
\theoremstyle{definition}
\newtheorem{definition}[theorem]{Definition}

\newtheorem{remark}[theorem]{Remark}
\newtheorem{corollary}[theorem]{Corollary}
\newtheorem{conjecture}[theorem]{Conjecture}
\numberwithin{equation}{section}

\begin{document}

\vspace{0.5in}

\renewcommand{\bf}{\bfseries}
\renewcommand{\sc}{\scshape}
\vspace{0.5in}

\title[Non-contractible configuration spaces]%
{Non-contractible configuration spaces \\ }

\author{Cesar A. Ipanaque Zapata}
\address{Deparatmento de Matem\'{a}tica,UNIVERSIDADE DE S\~{A}O PAULO
INSTITUTO DE CI\^{E}NCIAS MATEM\'{A}TICAS E DE COMPUTA\c{C}\~{A}O -
USP , Avenida Trabalhador S\~{a}o-carlense, 400 - Centro CEP:
13566-590 - S\~{a}o Carlos - SP, Brasil}
\email{cesarzapata@usp.br}


\subjclass[2010]{Primary 55R80, 55S40, 55P35; Secondary 55M30}                                    %

\keywords{Ordered configuration spaces, Fadell and Neuwirth's fibration, pointed loop spaces, suspension, Lusternik-Schnirelmann category, Topological complexity}
\thanks {I am very grateful to Frederick Cohen and Jes\'{u}s Gonz\'{a}lez for their comments and encouraging remarks which were of invaluable mental support. Also, the author wishes to acknowledge support for this research, from FAPESP 2016/18714-8.}

\begin{abstract} Let $F(M,k)$ be the configuration space of ordered $k-$tuples of distinct points in the manifold $M$. Using the Fadell-Neuwirth fibration, we prove that
   the configuration spaces $F(M,k)$ are never contractible, for $k\geq 2$. As applications of our results, we will calculate the LS category and topological complexity for its loop space and suspension.
\end{abstract}

\maketitle


\section{\bf Introduction}

Let $X$ be the space of all possible configurations or states of a mechanical system. A motion planning algorithm on $X$ is a function which assigns to any pair of configurations $(A,B)\in X\times X$, an initial state $A$ and a desired state $B$, a continuous motion of the system starting at the initial state $A$ and ending at the desired state $B$. The elementary problem of robotics, \textit{the motion planning problem}, consists of finding a motion planning algorithm for a given mechanical system. The motion planning algorithm should be continuous, that is, it depends continuously on the pair of points $(A,B)$. Absence of continuity will result in the instability of behavior of the motion planning. Unfortunately, a continuous motion planning on space $X$ exists if and only if $X$ is contractible, see \cite{farber2003topological}. The design of effective motion planning algorithms is one of the challenges of modern robotics, see, for example Latombe \cite{latombe2012robot} and LaValle \cite{lavalle2006planning}. 

Investigation of the problem of simultaneous motion planning without collisions for $k$ robots in a topological manifold $M$ leads one to study the (ordered) configuration space $F(M,k)$. We want to know if exists a continuous motion planning algorithm on the space $F(M,k)$. Thus, an interesting question is whether $F(M,k)$ is contractible.
 

It seems likely that the configuration space $F(M,k)$ is not contractible for certain topological manifolds $M$. Evidence for this statement is given in the work of F. Cohen and S. Gitler, in  \cite{cohen2002loop}, they described the homology of loop spaces of the configuration space $F(M,k)$ whose results showed that this homology is non trivial. In a robotics setting, the (collision-free) motion planning problem is challenging since it is not known an effective motion planning algorithm, see \cite{le2018multi}. 

In this paper, using the Fadell-Neuwirth fibration, we will prove that the configuration spaces $F(M,k)$ of certain topological manifolds $M$, are never contractible (see Theorem \ref{theor4}). Note that the configuration space $F(X,k)$ can be contractible, for any $ k\geq 1$ (e.g. if $X$ is an infinite indiscrete space or if $X=\mathbb{R}^\infty$). As applications of our results, we will calculate the LS category and topological complexity for the (pointed) loop space $\Omega F(M,k)$ (see Theorem \ref{cat-and-tc-loop-config}) and the  suspension $\Sigma F(M,k)$ (see Theorem \ref{cat-suspensio-config} and Proposition \ref{tc-suspension-config}).
 
 
 \begin{conjecture}
 If $X$ is a path-connected and paracompact topological space with covering dimension $1\leq dim(X)<\infty$. Then the configuration spaces $F(X,k)$ are never contractible, for $k\geq 2$.
 \end{conjecture}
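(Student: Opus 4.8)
The plan is to replace the Fadell--Neuwirth fibration, which is unavailable for a general space $X$, by the free symmetric-group action on $F(X,k)$ and then to invoke Smith theory. The symmetric group $S_k$ acts on $F(X,k)$ by permuting coordinates, and this action is free because the $k$ points of a configuration are pairwise distinct; restricting to the transposition of the first two coordinates gives a free $\mathbb{Z}/2$-action. The guiding principle is the classical Smith-theoretic fact that a finite-dimensional $\mathbb{F}_2$-acyclic space cannot carry a free involution. Since a contractible space is $\mathbb{F}_2$-acyclic, establishing that $F(X,k)$ is finite-dimensional in the appropriate sense will immediately contradict the existence of the free involution, and hence force $F(X,k)$ to be non-contractible.

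Concretely, I would proceed in three steps. First, verify that the free $\mathbb{Z}/2$-action produces a genuine double cover $p\colon F(X,k)\to Y:=F(X,k)/(\mathbb{Z}/2)$; for a finite group acting freely on a Hausdorff space this is routine, since every point admits an invariant neighborhood whose translates are pairwise disjoint. The hypotheses that $X$ is path-connected with $1\le \dim(X)<\infty$ guarantee that $X$ is uncountable, so $F(X,k)\neq\varnothing$, and one checks that $Y$ is again paracompact. Second, and this is the crux, I would establish that the finite covering dimension of $X$ makes $F(X,k)$, an open subspace of $X^{k}$, finite-dimensional, hence of finite cohomological dimension over $\mathbb{F}_2$, and that the same holds for $Y$ (using that covering dimension is preserved by covering maps between paracompact spaces). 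Third, assuming for contradiction that $F(X,k)$ is contractible, run the Smith--Gysin sequence of the cover.

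The contradiction itself I would extract from the Gysin sequence of the double cover in \v{C}ech (equivalently sheaf) cohomology with $\mathbb{F}_2$-coefficients, which is legitimate because \v{C}ech cohomology is homotopy invariant on paracompact spaces. The short exact sequence of sheaves $0\to \underline{\mathbb{F}_2}\to p_\ast\underline{\mathbb{F}_2}\to \underline{\mathbb{F}_2}\to 0$ on $Y$ yields a long exact sequence
\[
\cdots \to \check H^{n}(F(X,k))\to \check H^{n}(Y)\xrightarrow{\;\cup\, w\;}\check H^{n+1}(Y)\to \check H^{n+1}(F(X,k))\to\cdots,
\]
where $w\in \check H^{1}(Y;\mathbb{F}_2)$ is the class classifying the cover. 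Contractibility of $F(X,k)$ kills the outer terms for $n\ge 1$, so $\cup\, w$ is an isomorphism in every positive degree; since $F(X,k)$ is connected the cover is nontrivial and $w\neq 0$, whence $w^{n}\neq 0$ for all $n\ge 1$ and $\check H^{n}(Y;\mathbb{F}_2)\neq 0$ for all $n$. This contradicts the finite cohomological dimension of $Y$ secured in the second step, completing the argument.

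The hard part will be the dimension bookkeeping of the second step, not the cohomological contradiction. For metrizable $X$ everything is clean: $X^{k}$ is metrizable with $\dim X^{k}\le k\,\dim X$, the open subset $F(X,k)$ inherits finite dimension, and paracompactness is automatic, so Smith theory applies verbatim. For a merely paracompact (possibly non-metrizable) $X$, however, none of these facts is automatic: paracompactness is preserved neither under finite products nor by passage to open subspaces, the logarithmic inequality $\dim(X\times Y)\le \dim X+\dim Y$ can fail, and monotonicity of covering dimension under open subspaces is delicate. Securing that $F(X,k)$ and $Y$ are paracompact and finite-dimensional --- equivalently, finitistic, so that the Smith-theoretic machinery is valid --- is exactly the technical obstruction that keeps the statement at the level of a conjecture in this generality.
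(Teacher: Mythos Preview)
The statement you are addressing is labeled a \emph{Conjecture} in the paper and is not proved there; there is therefore no proof in the paper to compare your proposal against. What the paper does contain, in the Remark immediately following Theorem~2.1, is exactly the classifying-space/Smith-theoretic argument you outline, but applied only in the manifold setting: if $F(M,k)$ were contractible, then $F(M,k)/S_k$ would be a finite-dimensional $BS_k$, which is impossible. Your proposal is the natural attempt to push that same idea to general paracompact $X$, and your diagnosis of why it stalls is accurate.

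Your third step (the Gysin/Smith contradiction) is correct as written, and your first step is routine. The genuine obstruction is precisely your second step, and you have located it honestly: for merely paracompact $X$ one cannot in general conclude that $X^k$, or the open subset $F(X,k)$, or the quotient $Y$, is paracompact or of finite covering dimension, and without some finiteness of cohomological dimension for $Y$ the Smith argument has no traction. This is not a repairable gap in your write-up; it is the reason the statement is posed as a conjecture. What your analysis does yield is a clean proof in the metrizable case (where products stay metrizable, the product inequality $\dim(X^k)\le k\dim X$ holds, and open subspaces inherit both metrizability and finite dimension), which is a genuine partial result beyond what the paper states.
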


Computation of LS category and topological complexity of the configuration space $F(M,k)$ is a great challenge. The LS category of the configuration space  $F(\mathbb{R}^m,k)$ has been computed by Roth in \cite{roth2008category}. In Farber and Grant's work \cite{farber2009topological}, the authors computed the TC of the configuration space $F(\mathbb{R}^m,k)$. Farber, Grant and Yuzvinsky determined the topological complexity of $F(\mathbb{R}^m-Q_r,k)$ for $m=2,3$ in \cite{farber2007topological}. Later González and Grant extended the results to all dimensions $m$ in \cite{gonzalez2015sequential}. Cohen and Farber in \cite{cohen2011topological} computed the topological complexity of the configuration space $F(\Sigma_g-Q_r,k)$ of orientable surfaces $\Sigma_g$. Recently in \cite{zapata2017lusternik}, the author computed the LS category and TC of the configuration space $F(\mathbb{CP}^m,2)$. The LS category and TC of the configuration space of ordered $2-$tuples of distinct points in   $G\times\mathbb{R}^n$ has been computed by the author in \cite{zapata2017category}. Many more related results can be found in the recent survey papers \cite{cohen2018topological} and \cite{farber2017configuration}. 


\section{\bf Main Results}
Let $M$ denote a connected $m-$dimensional topological manifold (without boundary), $m\geq 1$. The \textit{configuration space} $F(M,k)$, of ordered $k-$tuples of distinct points in $M$ (see \cite{fadell1962configuration}) is the subspace of $M^k$ given by
 \[F(M,k)=\{(m_1,\ldots,m_k)\in M^k|~~m_i\neq m_j\text{ for all } i\neq j \}.\]
 
Let $Q_r=\{q_1,\ldots,q_r\}$ denote a set of $r$ distinct points of $M$. 
 
 Let $M$ be a connected finite dimensional topological manifold (without boundary) with dimension at least 2 and $k> r\geq 1$. It is well known that the projection map \begin{equation}
 \pi_{k,r}:F(M,k)\longrightarrow F(M,r),~(x_1,\ldots,x_k)\mapsto (x_1,\ldots,x_r)
 \end{equation} is a fibration with fibre $F(M-Q_r,k-r)$. It is called the Fadell-Neuwirth fibration \cite{fadell2001geometry}. In contrast, when the manifold $M$ has nonempty boundary, $\pi_{k,r}$ is not a fibration. The fact that the map $\pi_{k,r}$ is not a fibration may be seen by considering, for example, the manifold $M=\mathbb{D}^2$ that is with boundary but the fibre $\mathbb{D}^2-\{(0,0)\}$ is not homotopy equivalent to the fibre $\mathbb{D}^2-\{(1,0)\}$.  
 
 Let $X$ be a space, with base-point $x_0$. The pointed loop space is denoted by $\Omega X$, as its base-point, if it needs one, we take the function $w_0$ constant at $x_0$. We recall that a topological space $X$ is weak-contractible if all homotopy groups of $X$ are trivial, that is, $\pi_n(X,x_0)=0$ for all $n\geq 0$ and all choices of base point $x_0$.
 
 In this paper, using the Fadell-Neuwirth fibration, we prove the following theorem

\begin{theorem}\label{theor4}[Main Theorem]
 If $M$ is a connected finite dimensional topological 
 manifold, 
 then the configuration space $F(M,k)$ is not contractible (indeed, it is never weak-contractible), for any $ k\geq 2$.
\end{theorem}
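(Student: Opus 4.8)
The plan is to prove the stronger statement that $F(M,k)$ is never weak-contractible by exhibiting a nontrivial homotopy group. The natural tool is the Fadell-Neuwirth fibration $\pi_{k,1}\colon F(M,k)\to M$ with fiber $F(M-Q_1,k-1)$, combined with the long exact sequence in homotopy groups. If $F(M,k)$ were weak-contractible, then from the fibration
\[
\cdots \to \pi_n(F(M-Q_1,k-1)) \to \pi_n(F(M,k)) \to \pi_n(M) \to \pi_{n-1}(F(M-Q_1,k-1)) \to \cdots
\]
the vanishing of all $\pi_n(F(M,k))$ would force the boundary maps to be isomorphisms, so I would obtain $\pi_n(M) \cong \pi_{n-1}(F(M-Q_1,k-1))$ for all $n$. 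The strategy is to derive a contradiction from this forced relationship between the homotopy of the base manifold and the homotopy of a configuration space of one fewer point on a punctured manifold.

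First I would reduce to a convenient case. Since it suffices to find one nonvanishing homotopy group, and since the fiber inclusion $F(M-Q_1,k-1)\hookrightarrow F(M,k)$ must then be null on homotopy, I would focus on the fundamental group and on $\pi_2$. The key geometric input is that $M-Q_1$, an open connected manifold of dimension $m\geq 1$ with at least one puncture, has nontrivial topology detectable in low degrees: removing a point from a connected $m$-manifold produces a space that is homotopy equivalent to something with nontrivial $\pi_{m-1}$ (a small sphere $S^{m-1}$ linking the removed point injects nontrivially), and for $k-1\geq 1$ the configuration space $F(M-Q_1,k-1)$ retains this nontriviality. A clean base case is $m=1$: then $M$ is $\mathbb{R}$ or $S^1$, and $F(M,k)$ for $k\geq 2$ is visibly homotopy equivalent to a disjoint union of contractible pieces or to $S^1$, neither of which is weak-contractible, so that case can be dispatched directly.

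For the main case $m\geq 2$ I would induct on $k$, using the full Fadell-Neuwirth tower and the two-point fibration $\pi_{k,k-1}$ with fiber $M-Q_{k-1}$. The cleanest contradiction comes from comparing the punctured manifold's homotopy with the base. Concretely, I expect the argument to hinge on the fact that a small linking sphere $S^{m-1}$ around a removed point of $M$ gives a class in $\pi_{m-1}(M-Q_1)$ that is nontrivial (it bounds in $M$ but not in $M-Q_1$), and tracking this class up the tower shows $F(M,k)$ cannot have all homotopy groups vanishing. The main obstacle will be handling manifolds $M$ whose own homotopy is complicated or whose low-degree homotopy could conspire with the fiber's via the exact sequence to produce cancellation; in particular I will need to ensure the linking-sphere class is not killed by a boundary map into $\pi_{m-1}$ of the next fiber. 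To control this I would use homology instead of homotopy at the decisive step—invoking that the Cohen-Gitler computation cited in the introduction shows $H_*(\Omega F(M,k))$ is nontrivial, which via the loop-space relation forces $F(M,k)$ to have nontrivial higher homotopy and hence cannot be weak-contractible—so the genuine difficulty reduces to making the dimension-counting and the base-case analysis watertight rather than to any single hard computation.
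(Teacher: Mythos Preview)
Your plan correctly identifies the Fadell--Neuwirth fibration and its long exact sequence as the central tool, and your reduction for $m=1$ is fine. But the proposal has a genuine gap at the decisive step. You yourself flag the obstacle: for a general $M$ with complicated homotopy, the boundary maps in the long exact sequence could in principle produce cancellation, so the linking-sphere class in $\pi_{m-1}(M-Q_1)$ is not by itself enough to conclude. You then propose to fall back on the Cohen--Gitler computation of $H_*(\Omega F(M,k))$, but this is not a legitimate escape hatch. Those results are stated under hypotheses (typically orientability, closedness, or specific coefficient conditions) and do not cover an arbitrary connected finite-dimensional topological manifold; indeed the paper cites Cohen--Gitler only as \emph{evidence} for the theorem, not as a proof. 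Invoking it wholesale would be assuming what is to be shown.

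The paper closes this gap with two ideas you are missing. First, it organizes the proof by a dichotomy on $\pi_1(M)$: if $\pi_1(M)\neq 0$, the inclusion $F(M,k)\hookrightarrow M^k$ is shown to be surjective on $\pi_1$, so $\pi_1(F(M,k))\neq 0$ and one is done immediately. Second, and this is the substantive point you lack, for $\pi_1(M)=0$ the paper exploits a \emph{finite versus infinite} contradiction rather than chasing a single linking class. If $F(M,k)$ were weak-contractible, the fibration would force a weak equivalence between $\Omega M$ and the fiber $F(M-Q_1,k-1)$. But $F(M-Q_1,k-1)$ is itself a finite-dimensional manifold, so its homology is bounded above in degree; whereas for a simply-connected, non-weak-contractible $M$ a short Serre spectral sequence argument (Lemma~\ref{homology-loop-space}) shows $H_*(\Omega M;\mathbb{K})$ is nonzero in arbitrarily high degrees. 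This dimensional mismatch is robust and sidesteps exactly the cancellation problem you could not resolve. The remaining case, $M$ simply-connected and weak-contractible, is handled separately (Proposition~\ref{theor333}) by observing that the fiber inclusion is then a weak equivalence and that $F(M-Q_1,k-1)$ is already known to be non-weak-contractible. Your inductive/linking-sphere outline does not supply a replacement for this bounded-versus-unbounded argument, and without it the proof does not close.
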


\begin{remark}
Theorem \ref{theor4} can be proved using classifying spaces. I am very grateful to Prof. Nick Kuhn for his suggestion about the following proof. Let $M$ be a connected finite dimensional topological manifold. If the configuration space $F(M,k)$ was contractible, then the quotient $F(M,k)/S_k$ would be a finite dimensional model for the classifying space of the $k^{th}$ symmetric group $S_k$.  But if $G$ is a nontrivial finite group or even just contains any nontrivial elements of finite order, then there is no finite dimensional model for $BG$ because $H^\ast(G)$ is periodic. Thus $F(M,k)$ is never contractible for $k\geq 2$.
\end{remark}



\section{PROOF of Theorem \ref{theor4}}
The proof of Theorem \ref{theor4} is greatly simplified by actually working on two main steps:
\begin{itemize}
    \item[S1.] We first get the Theorem \ref{theor4} when $\pi_1(M)=0$ (Proposition \ref{cor2}).
    \item[S2.] Then we prove the Theorem \ref{theor4} when $\pi_1(M)\neq 0$ (It follows from Lemma \ref{epimor}).
\end{itemize}

Here we note that the manifolds being considered are without boundary.


Step $S1$ above is accomplished proving the next four results.
 
 \begin{lemma} \label{theor2} 
 Let $M$ denote a connected $m-$dimensional topological manifold, $m\geq 2$. If $r\geq 1$, then  the configuration space $F(M-Q_r,k)$ is not contractible (indeed, it is not weak-contractible), $\forall k\geq 2$.
 \end{lemma}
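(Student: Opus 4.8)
The plan is to reduce the statement to the case $k=2$ by means of sections of the Fadell--Neuwirth fibrations, and then to detect a nontrivial homotopy group of $F(M-Q_r,2)$ coming from its fibre. Write $N=M-Q_r$; since $r\geq 1$, the space $N$ is a connected, \emph{non-compact} $m$-manifold with $m\geq 2$. The non-compactness is the feature I intend to exploit, both to produce the required sections and to produce nontrivial homology in the fibre.

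First I would record the reduction. For an open manifold $N$ the Fadell--Neuwirth projections $\pi_{k,2}\colon F(N,k)\to F(N,2)$ and $\pi_{2,1}\colon F(N,2)\to N$ admit continuous sections (the extra points can be pushed toward an end of $N$, i.e.\ toward one of the deleted punctures, while staying distinct from the given configuration). Given a section $s$ of $\pi_{k,2}$, the identity $\pi_{k,2}\circ s=\mathrm{id}$ forces $s_{*}\colon \pi_n(F(N,2))\to\pi_n(F(N,k))$ to be injective for every $n$; hence it suffices to prove that $F(N,2)$ is not weak-contractible.

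The key computation concerns the fibre of $\pi_{2,1}$, namely $N-\{x_1\}=M-Q_{r+1}$. Using a Mayer--Vietoris sequence for $N=U\cup(N-\{x_1\})$ with $U$ a small ball about $x_1$ (so $U\simeq\mathbb{R}^m$ and $U-\{x_1\}\simeq S^{m-1}$), together with the vanishing $H_m(N;\mathbb{Z}/2)=0$ that holds because $N$ is a connected non-compact $m$-manifold, I would obtain an injection $\mathbb{Z}/2\hookrightarrow H_{m-1}(N-\{x_1\};\mathbb{Z}/2)$; that is, the linking sphere of $x_1$ carries a nontrivial class. Since $m-1\geq 1$, the fibre $N-\{x_1\}$ has nontrivial reduced homology, so (a weakly contractible space has trivial reduced homology, via CW-approximation and Whitehead's theorem) it is not weak-contractible; thus $\pi_{n_0}(N-\{x_1\})\neq 0$ for some $n_0\geq 1$.

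Finally I would assemble these. The section of $\pi_{2,1}$ makes $(\pi_{2,1})_{*}$ surjective on homotopy, so in the long exact sequence of the fibration $N-\{x_1\}\to F(N,2)\to N$ the connecting maps vanish and the fibre inclusion induces injections $\pi_n(N-\{x_1\})\hookrightarrow \pi_n(F(N,2))$. In particular $\pi_{n_0}(F(N,2))\neq 0$, so $F(N,2)$ is not weak-contractible, and by the first step neither is $F(N,k)$ for any $k\geq 2$. The main obstacle is precisely the first step: establishing that these Fadell--Neuwirth fibrations genuinely admit continuous, globally defined sections for the open manifold $N=M-Q_r$. This is exactly where the hypothesis $r\geq 1$ is essential---for a closed manifold such sections need not exist---and I would settle it by an explicit construction using the ends of $N$ (equivalently, a nowhere-zero vector field on the open manifold $N$), appealing to Fadell and Neuwirth \cite{fadell1962configuration,fadell2001geometry}.
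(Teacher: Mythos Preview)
Your argument is correct and follows essentially the same architecture as the paper's: both proofs use the Fadell--Neuwirth sections available over the punctured manifold $M-Q_r$ (with $r\geq 1$) to split the homotopy long exact sequence, and then exhibit a nontrivial homotopy class coming from one of the punctured fibres. The paper iterates the first-coordinate projection all the way down to obtain $\pi_q(F(M-Q_r,k))\cong\bigoplus_{i=0}^{k-1}\pi_q(M-Q_{r+i})$ and then invokes the wedge decomposition $M-Q_{r+k-1}\simeq\bigvee S^{m-1}\vee(M-V)$ from \cite{husseini2002configuration}; you instead reduce immediately to $k=2$ and detect a nonzero class in $H_{m-1}(M-Q_{r+1};\mathbb{Z}/2)$ by a direct Mayer--Vietoris argument using $H_m(M-Q_r;\mathbb{Z}/2)=0$ for the non-compact manifold $M-Q_r$. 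Your route is slightly more self-contained (no external wedge lemma is needed), while the paper's yields the full additive description of the homotopy groups as a byproduct; the underlying strategy is the same.
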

 \begin{proof}
 Recall that if $p:E\longrightarrow B$ is the projection map in a fibration with inclusion of the fibre $i:F\longrightarrow E$ such that $p$ supports a cross-section $\sigma$, then (1) $\pi_q(E)\cong \pi_q(F)\oplus \pi_q(B), ~\forall q\geq 2$ and (2) $\pi_1(E)\cong \pi_1(F)\rtimes \pi_1(B)$. 
 
 If $r\geq 1$, then the first coordinate projection map  $\pi:F(M-Q_r,k)\longrightarrow M-Q_r$ is a fibration with fibre $F(M-Q_{r+1},k-1)$ and $\pi$ admits a section  (\cite{fadell1962configuration}, Theorem 1). Thus (1) $\pi_q(F(M-Q_r,k))\cong \bigoplus_{i=0}^{k-1} \pi_q(M-Q_{r+i}),~\forall q\geq 2$ (\cite{fadell1962configuration}, Theorem 2) and (2) $\pi_1(F(M-Q_r,k))$ is isomorphic to \[( (\cdots (\pi_1(M-Q_{r+k-1})\rtimes\pi_1(M-Q_{r+k-2}))\cdots )\rtimes\pi_1(M-Q_{r+1}))\rtimes \pi_1(M-Q_r).\] 
 
 Finally, notice that $M-Q_{r+k-1}$ is homotopy equivalent to $\bigvee_{i=1}^{r+k-2}\mathbb{S}^{m-1} \vee (M-V)$, where $V$ is an open $m-$ball in $M$ such that $Q_{r+k-1}\subset V$ (\cite{husseini2002configuration}, Proposition 3.1). Thus $M-Q_{r+k-1}$ is not weak contractible, therefore $F(M-Q_r,k)$ is not weak-contractible.
 
 \end{proof}

 \begin{lemma}\label{homology-loop-space}
 If $M$ is a simply-connected finite dimensional topological manifold which is not weak-contractible, then the singular homology (with coefficients in a field $\mathbb{K}$) of $\Omega M$ does not vanish in sufficiently large degrees.
 \end{lemma}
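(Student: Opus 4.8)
The plan is to run the Serre spectral sequence of the path--loop fibration $\Omega M \to PM \to M$ with coefficients in the field $\mathbb{K}$, exploiting that $PM$ is contractible. Since the lemma concerns homology while the spectral sequence is cleanest in cohomology, I would first pass to cohomology: over a field $H^i(\Omega M;\mathbb{K})$ is the $\mathbb{K}$-dual of $H_i(\Omega M;\mathbb{K})$, so "nonzero in arbitrarily large degrees" is the same statement for $H_*$ and for $H^*$. Two inputs feed the argument. First, because $M$ is a finite-dimensional manifold its cohomological dimension is bounded by $\dim M$, so there is a largest degree $N$ with $H^N(M;\mathbb{K})\neq 0$. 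Second, because $M$ is simply connected and not weak-contractible one has $\tilde H_*(M;\mathbb{Z})\neq 0$, and I would work over a field $\mathbb{K}$ for which the reduced $\mathbb{K}$-cohomology of $M$ is still nonzero; together with simple connectivity this forces $N\geq 2$.

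The key step is a \emph{top-corner survival} argument. Arguing by contradiction, suppose $H^q(\Omega M;\mathbb{K})$ vanishes for all large $q$, and let $D$ be the top degree in which it is nonzero. Over a field and with $M$ simply connected the coefficients are untwisted, so $E_2^{p,q}=H^p(M;\mathbb{K})\otimes_{\mathbb{K}} H^q(\Omega M;\mathbb{K})$, and I would focus on the corner term $E_2^{N,D}=H^N(M;\mathbb{K})\otimes_{\mathbb{K}} H^D(\Omega M;\mathbb{K})$, which is nonzero as a tensor product of nonzero $\mathbb{K}$-vector spaces. By maximality of $N$, every differential leaving $(N,D)$ lands in base degree $>N$ and hence vanishes; by maximality of $D$, every differential entering $(N,D)$ originates in fibre degree $>D$ and hence vanishes. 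Therefore $E_2^{N,D}=E_\infty^{N,D}\neq 0$. But $E_\infty^{N,D}$ is a subquotient of $H^{N+D}(PM;\mathbb{K})$, which is zero since $PM$ is contractible and $N+D\geq N\geq 2>0$. This contradiction shows $H^*(\Omega M;\mathbb{K})$, and so $H_*(\Omega M;\mathbb{K})$, is nonzero in arbitrarily large degrees. The appeal of this route is that it needs no computation of powers or transgressions and no case analysis on the characteristic of $\mathbb{K}$; everything is forced by the position of the corner together with the acyclicity of $PM$.

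I expect the genuine obstacle to lie not in the spectral-sequence bookkeeping but in the second input: guaranteeing $\tilde H_*(M;\mathbb{K})\neq 0$ over the \emph{given} field. Non-weak-contractibility only yields $\tilde H_*(M;\mathbb{Z})\neq 0$, and if the lowest nonvanishing integral homology of $M$ is torsion of order prime to $\mathrm{char}\,\mathbb{K}$ (which can occur for open manifolds modelled on Moore spaces), then $\tilde H_*(M;\mathbb{K})=0$ and the conclusion genuinely fails for that $\mathbb{K}$. Thus the statement is best read with $\mathbb{K}$ chosen so that $\tilde H_*(M;\mathbb{K})\neq 0$ --- for instance $\mathbb{Q}$ when the first nonzero integral homology is not all torsion, or $\mathbb{F}_p$ for a prime $p$ dividing that torsion. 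In the manifold situations relevant to the paper this is automatic: closed simply connected $M$ have $H_{\dim M}(M;\mathbb{K})=\mathbb{K}$ by Poincar\'e duality, and the punctured manifolds of Lemma~\ref{theor2} carry free homology coming from their wedge-of-spheres description, so $\tilde H_*(\,\cdot\,;\mathbb{K})\neq 0$ over every field. The only remaining routine point to record is the standard fact that a finite-dimensional topological manifold has cohomological dimension at most $\dim M$, which is exactly what makes the top base degree $N$ exist; once $N$ and $D$ are in place the rest of the argument is formal.
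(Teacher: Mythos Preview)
Your argument is correct and is essentially the same as the paper's: both run the Serre spectral sequence of the path--loop fibration over a field, locate the ``top corner'' class $b\otimes f$ (maximal base degree tensored with maximal fibre degree), and observe that it survives to $E_\infty$ by maximality, contradicting the contractibility of the path space; your passage to cohomology is cosmetic since the paper works directly in homology with the same corner argument. Your caveat about needing $\widetilde{H}_*(M;\mathbb{K})\neq 0$ over the \emph{given} field is well taken---the paper tacitly assumes this when it picks a top-degree class $b\in\widetilde{H}_*(M;\mathbb{K})$---and, as you note, in the applications this is harmless.
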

 \begin{proof}
 By contradiction, we will suppose the singular homology of $\Omega M$ vanishes in sufficiently large degrees, that is, there exists an integer $q_0\geq 1$ such that, $H_q(\Omega M;\mathbb{K})=0,\forall q\geq q_0$, where $\mathbb{K}$ is a field. Let $f$ denote a nonzero homology class of maximal degree in $H_\ast(\Omega M;\mathbb{K})$. As $M$ is finite dimensional and not weak-contractible, let $b$ denote a nonzero homology class in $\widetilde{H}_\ast(M;\mathbb{K})$ of maximal degree (here $\widetilde{H}_\ast(-;\mathbb{K})$ denote reduced singular homology, with coefficients in a field $\mathbb{K}$). Notice that $b\otimes f$ survives to give a non-trivial class in the Serre spectral sequence abutting to $H_\ast(P(M,x_0);\mathbb{K})$, since $M$ is simply-connected, the local coefficient system $H_{\ast}(\Omega M;\mathbb{K})$ is trivial, where \begin{equation*}
 P(M,x_0)=\{\gamma\in PM\mid \gamma(0)=x_0\},
 \end{equation*} it is contractible. This is a contradiction and so the singular homology of $\Omega M$ does not vanish in sufficiently large degrees.  
 \end{proof}
 
 \begin{proposition}\label{theor33}
If $M$ is a simply-connected topological manifold which is not weak-contractible with dimension at least $2$, then the configuration space $F(M,k)$ is not contractible (indeed, it is never weak-contractible), $\forall k\geq 2$.
\end{proposition}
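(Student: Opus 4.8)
The plan is to argue by contradiction, exploiting the Fadell--Neuwirth fibration with base $M$ so that Lemma~\ref{homology-loop-space} becomes applicable. Suppose $F(M,k)$ were weak-contractible. Since $M$ is a connected $m$-manifold without boundary with $m\geq 2$ and $k\geq 2>1$, the projection
\[
\pi_{k,1}\colon F(M,k)\longrightarrow F(M,1)=M
\]
is a fibration with fibre $F(M-Q_1,k-1)$. My aim is to transfer information about $\Omega M$ onto this fibre and then contradict its finite-dimensionality.

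First I would record the standard fact that, in any fibration $F\to E\to B$ whose total space $E$ is weak-contractible, the fibre is weakly homotopy equivalent to $\Omega B$: one compares $\pi_{k,1}$ with the path--loop fibration $\Omega M\to P(M,x_0)\to M$, both of which have weak-contractible total space, so the induced map on fibres is a weak equivalence (equivalently, the connecting homomorphisms $\pi_{q+1}(M)\xrightarrow{\cong}\pi_q(F(M-Q_1,k-1))$ from the long exact sequence identify the fibre with the homotopy fibre $\Omega M$). Applying this to $\pi_{k,1}$ gives $F(M-Q_1,k-1)\simeq \Omega M$ weakly, and hence isomorphisms $H_q(F(M-Q_1,k-1);\mathbb{K})\cong H_q(\Omega M;\mathbb{K})$ for every degree $q$ and every field $\mathbb{K}$.

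The contradiction then follows from a dimension count. On the one hand, $F(M-Q_1,k-1)$ is an open subset of $(M-Q_1)^{k-1}$, hence a topological manifold of finite dimension $m(k-1)$, so its singular homology vanishes in all degrees $q>m(k-1)$. On the other hand, $M$ is simply-connected, finite-dimensional and not weak-contractible, so Lemma~\ref{homology-loop-space} applied to $M$ guarantees that $H_q(\Omega M;\mathbb{K})$ does \emph{not} vanish for arbitrarily large $q$. These two facts are incompatible with the homology isomorphism above, so $F(M,k)$ cannot be weak-contractible; and since a contractible space is weak-contractible, it is in particular not contractible. I expect the only delicate point to be the identification $F(M-Q_1,k-1)\simeq\Omega M$ (the passage from ``total space weak-contractible'' to ``fibre weakly equivalent to the loop space of the base''); once that is secured, the clash between the finite-dimensionality of the configuration space and the non-vanishing of $H_\ast(\Omega M)$ in high degrees closes the argument at once.
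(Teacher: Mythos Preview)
Your argument is correct and follows essentially the same route as the paper: use the Fadell--Neuwirth fibration $F(M-Q_1,k-1)\to F(M,k)\to M$, observe that weak-contractibility of the total space forces the fibre to be weakly equivalent to $\Omega M$, and then pit the finite-dimensionality of the fibre against Lemma~\ref{homology-loop-space}. If anything, your write-up is more explicit than the paper's in justifying the weak equivalence $F(M-Q_1,k-1)\simeq \Omega M$.
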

 \begin{proof}
 By hypothesis, $M$ is a  connected finite dimensional topological manifold of dimension at least $2$. Consequently, there is a fibration $F(M,k)\longrightarrow M$ with fibre $F(M-Q_1,k-1)$ ($k\geq 2$). We just have to note that in sufficiently large degrees, the singular homology, with coefficients in a field $\mathbb{K}$, of  $F(M-Q_1,k-1)$ vanishes, since $F(M-Q_1,k-1)$ is a connected finite dimensional topological manifold. 
 
 On the other hand, if $F(M,k)$ were weak-contractible, then the pointed loop space of $M$ is weakly homotopy equivalent to $F(M-Q_1,k-1)$ which it cannot be by Lemma \ref{homology-loop-space}. Thus, the configuration space $F(M,k)$ is not weak-contractible.
 \end{proof}
 
 \begin{proposition}\label{theor333}
If $M$ is a topological manifold which is weak-contractible with dimension at least $2$, then the configuration space $F(M,k)$ is not contractible (indeed, it is never weak-contractible), $\forall k\geq 2$.
\end{proposition}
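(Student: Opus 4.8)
The plan is to reuse the Fadell--Neuwirth fibration that drives Step $S1$, but now to turn the \emph{weak-contractibility} of $M$ into leverage rather than an obstruction. Since $M$ is a connected topological manifold of dimension $m\geq 2$, the first-coordinate projection
\begin{equation*}
\pi\colon F(M,k)\longrightarrow M,\qquad (x_1,\ldots,x_k)\mapsto x_1,
\end{equation*}
is a fibration with fibre $F(M-Q_1,k-1)$. Because $M$ is weak-contractible, $\pi_n(M)=0$ for every $n\geq 0$, so the long exact sequence of homotopy groups of this fibration collapses to isomorphisms
\begin{equation*}
\pi_n\bigl(F(M-Q_1,k-1)\bigr)\;\cong\;\pi_n\bigl(F(M,k)\bigr)\qquad (n\geq 1).
\end{equation*}
Hence it suffices to prove that the fibre $F(M-Q_1,k-1)$ is \emph{not} weak-contractible: any nontrivial homotopy group of the fibre transports across the isomorphism above to a nontrivial homotopy group of $F(M,k)$.

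For $k\geq 3$ this is immediate. The space $M-Q_1$ is a connected manifold of dimension $m\geq 2$, and $k-1\geq 2$, so Lemma~\ref{theor2} (applied to the manifold $M$ with $r=1$) already gives that $F(M-Q_1,k-1)$ is not weak-contractible, and we are done. The genuinely different case is $k=2$, where the fibre degenerates to $F(M-Q_1,1)=M-\{p\}$; here Lemma~\ref{theor2} no longer applies, since it configures at least two points, and one must argue directly.

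For $k=2$ I would compute homology. Choose a chart $V\cong\mathbb{R}^m$ around $p$ and run the Mayer--Vietoris sequence for the open cover $\{M-\{p\},\,V\}$ of $M$, whose intersection $V-\{p\}$ is homotopy equivalent to $\mathbb{S}^{m-1}$. Since $M$ is weak-contractible, $\widetilde{H}_\ast(M;\mathbb{Z})=0$, and since $V$ is contractible, $\widetilde{H}_\ast(V;\mathbb{Z})=0$; the sequence then forces $\widetilde{H}_n(M-\{p\};\mathbb{Z})\cong\widetilde{H}_n(\mathbb{S}^{m-1};\mathbb{Z})$ for all $n$, and in particular $\widetilde{H}_{m-1}(M-\{p\};\mathbb{Z})\cong\mathbb{Z}\neq 0$ because $m\geq 2$. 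A weak-contractible space has vanishing reduced homology, so $M-\{p\}$ is not weak-contractible, which completes the case $k=2$ and hence the proposition.

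The main obstacle is exactly this base case $k=2$: for $k\geq 3$ the statement is inherited verbatim from Lemma~\ref{theor2}, whereas for $k=2$ the fibre is merely a once-punctured manifold, so one must show by hand that deleting a single point from a weak-contractible manifold of dimension at least $2$ creates nonzero reduced homology. The Mayer--Vietoris computation is the cleanest route; the only point requiring care is that $M-\{p\}$ is path-connected (as $m\geq 2$), so its nonvanishing reduced homology forces a nontrivial homotopy group $\pi_{n_0}$ with $n_0\geq 1$, and the isomorphism $\pi_{n_0}(M-\{p\})\cong\pi_{n_0}(F(M,2))$ then exhibits $F(M,2)$ as not weak-contractible.
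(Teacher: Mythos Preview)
Your proof is correct and follows essentially the same approach as the paper: both use the Fadell--Neuwirth fibration over $M$ together with weak-contractibility of $M$ to reduce to the fibre, invoke Lemma~\ref{theor2} for $k\geq 3$, and handle $k=2$ by a Mayer--Vietoris argument showing $M-\{p\}$ is not weak-contractible. The only cosmetic difference is the choice of cover in the $k=2$ step: the paper covers $M$ by two once-punctured copies $M-\{q\}$ and $M-\{q'\}$ (with intersection $M-\{q,q'\}\simeq \mathbb{S}^{m-1}\vee(M-V)$), whereas you cover $M$ by $M-\{p\}$ and a chart $V$ (with intersection $\simeq\mathbb{S}^{m-1}$); your decomposition is slightly cleaner and yields the explicit computation $\widetilde{H}_{m-1}(M-\{p\})\cong\mathbb{Z}$ directly.
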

\begin{proof}
By the homotopy long exact sequence of the fibration $F(M,k)\longrightarrow M$ with fibre $F(M-Q_1,k-1)$, we can conclude the inclusion $i:F(M-Q_1,k-1)\hookrightarrow F(M,k)$ is a weak homotopy equivalence. If $k\geq 3$, then Lemma \ref{theor2} implies that $F(M-Q_1,k-1)$ is not weak contractible and so $F(M,k)$ is not weak contractible. If $k=2$, we consider the cover $M=A\cup B,~A=M-\{q\}, B=M-\{q^\prime\}, q,q^\prime$ distinct. Here we note that $A=M-\{q\}$ and $B=M-\{q^\prime\}$ are homeomorphic to $M-Q_1$ and $A\cap B=M-\{q,q^\prime\}$ is not weak-contractible, because $M-\{q,q^\prime\}$ is homotopy equivalent to the wedge $\mathbb{S}^{m-1}\vee (M-V)$, where $V$ is an open $m-$ball in $M$ such that $\{q,q^\prime\}\subset V$ (\cite{husseini2002configuration}, Proposition 3.1). Thus, the Mayer-Vietoris  sequence, for the given cover, implies $M-Q_1$ is not weak contractible and so $F(M,2)$ is not weak contractible. Therefore, $F(M,k)$ is not weak-contractible.

\end{proof}

By Propositions \ref{theor33} and \ref{theor333} we have the following statement.

\begin{proposition}\label{cor2}
 If $M$ is a simply-connected topological manifold with dimension at least $2$, then the configuration space $F(M,k)$ is not contractible (indeed, it is never weak-contractible), $\forall k\geq 2$.
\end{proposition}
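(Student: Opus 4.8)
The plan is to reduce Proposition \ref{cor2} to the two preceding results by a simple dichotomy on whether or not $M$ is weak-contractible. Given a simply-connected topological manifold $M$ of dimension at least $2$, exactly one of two mutually exclusive situations occurs: either $M$ is weak-contractible, or it is not. I would therefore split the argument along this alternative and invoke the appropriate earlier proposition in each branch.

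First I would dispose of the case in which $M$ is \emph{not} weak-contractible. Here $M$ is a simply-connected manifold of dimension at least $2$ that fails to be weak-contractible, which is precisely the hypothesis of Proposition \ref{theor33}. Applying that proposition directly yields that $F(M,k)$ is never weak-contractible for every $k \geq 2$.

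Next I would handle the complementary case in which $M$ \emph{is} weak-contractible. A weak-contractible manifold of dimension at least $2$ satisfies the hypotheses of Proposition \ref{theor333} (and since weak-contractibility already forces simple-connectivity, this is consistent with the standing assumption), so that proposition again gives that $F(M,k)$ is never weak-contractible for $k \geq 2$.

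Since the two cases are exhaustive for a simply-connected manifold, combining them shows that $F(M,k)$ is never weak-contractible, and in particular never contractible, for all $k \geq 2$. I do not expect a genuine obstacle here: the substantive work has been front-loaded into Propositions \ref{theor33} and \ref{theor333}, whose separation into the weak-contractible and non-weak-contractible cases was arranged exactly so that their disjunction covers every simply-connected $M$. The only point requiring a moment's care is verifying that these two hypotheses partition the simply-connected case with no overlap or gap, which the observation that weak-contractibility implies simple-connectivity settles cleanly.
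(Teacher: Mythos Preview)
Your proposal is correct and matches the paper's own argument exactly: the paper simply states that Proposition~\ref{cor2} follows from Propositions~\ref{theor33} and~\ref{theor333}, which is precisely the dichotomy on whether $M$ is weak-contractible that you describe.
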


A key ingredient for step $S2$ is given by the next result.

\begin{lemma}\label{epimor}
 If $M$ is a connected finite dimensional topological manifold with dimension at least $2$, then the inclusion map $i:F(M,k)\longrightarrow M^k$ induces a homomorphism $i_\ast:\pi_1F(M,k)\longrightarrow \pi_1M^k$ which is surjective. 
 \end{lemma}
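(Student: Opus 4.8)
The plan is to prove the statement by induction on $k$, comparing the Fadell--Neuwirth fibration that forgets the last point with the analogous coordinate projection of the product $M^k$. Since $M$ is path-connected we have $\pi_1M^k\cong \pi_1(M)^k$, and $i_\ast$ is assembled from the maps induced by the various coordinate inclusions. For $k=1$ the space $F(M,1)$ equals $M$ and $i$ is the identity, so there is nothing to prove; I therefore assume $k\geq 2$ and that the assertion holds for $k-1$.

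The fibration $\pi_{k,k-1}\colon F(M,k)\to F(M,k-1)$ has fibre $F(M-Q_{k-1},1)=M-Q_{k-1}$, and the inclusion $i$ carries it to the projection $p\colon M^k\to M^{k-1}$ forgetting the last coordinate, whose fibre is $M$; on fibres $i$ restricts to the inclusion $M-Q_{k-1}\hookrightarrow M$. Taking the induced homomorphisms of the two homotopy exact sequences, I obtain a commutative ladder whose rows, truncated on the right, read
\[
\pi_1(M-Q_{k-1})\longrightarrow \pi_1F(M,k)\longrightarrow \pi_1F(M,k-1)\longrightarrow 1
\]
and
\[
\pi_1(M)\longrightarrow \pi_1M^k\longrightarrow \pi_1M^{k-1}\longrightarrow 1 .
\]
Both rows are exact at the middle and right entries: exactness on the right uses that the fibres $M-Q_{k-1}$ and $M$ are connected, a connected manifold of dimension at least $2$ remaining connected after the deletion of finitely many points. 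The right-hand vertical map is surjective by the inductive hypothesis.

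It then remains to control the left-hand vertical map, induced by $M-Q_{k-1}\hookrightarrow M$, and I claim it is surjective on $\pi_1$. Geometrically a loop in $M$ is $1$-dimensional while $Q_{k-1}$ is $0$-dimensional, so since $\dim M\geq 2$ every loop can be pushed off the finitely many deleted points; formally this follows from a van Kampen argument, removing the points one at a time, since a small coordinate ball around each point is simply connected and meets $M-Q_{k-1}$ in a connected set, making $\pi_1(M)$ a quotient of the image of $\pi_1(M-Q_{k-1})$. Granting this, both outer verticals are surjective, and the four-lemma for epimorphisms applies: a direct chase, valid for not necessarily abelian groups since it uses only exactness together with surjectivity of the two outer maps, yields surjectivity of the middle map $i_\ast\colon \pi_1F(M,k)\to \pi_1M^k$, completing the induction.

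I expect the surjectivity of $\pi_1(M-Q_{k-1})\to\pi_1(M)$ to be the delicate point, since it is the only step that genuinely invokes the hypothesis $\dim M\geq 2$ and must be argued in the purely topological rather than smooth category; the remainder is a formal, though non-abelian, diagram chase together with routine bookkeeping of base points.
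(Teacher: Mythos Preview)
Your proof is correct and follows essentially the same strategy as the paper: induction on $k$ via the commutative square of fibrations $F(M,k)\to F(M,k-1)$ over $M^k\to M^{k-1}$, together with the key observation that the inclusion $M-Q_{k-1}\hookrightarrow M$ is $\pi_1$-surjective. The paper simply asserts this last fact and appeals to a commutative diagram of fibrations, whereas you spell out both the van Kampen justification and the non-abelian four-lemma chase; otherwise the arguments coincide.
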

 \begin{proof}
 We will prove it by induction on $k$. We just have to note that the inclusion map $j:M-Q_k\longrightarrow M$ induces an epimorphism $j_\ast:\pi_1(M-Q_k)\longrightarrow \pi_1M$, for any $k\geq 1$. The following diagram of fibrations (see Figure \ref{lemadosquatroo})
 \begin{figure}[!ht]
\scalebox{1.0}{\includegraphics{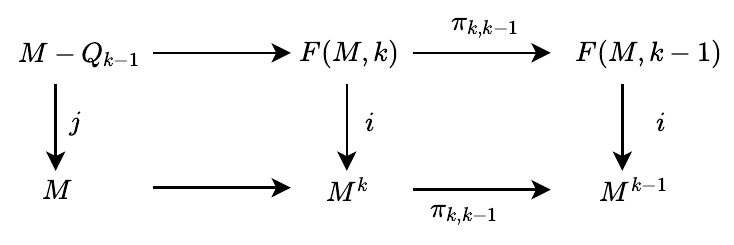}}
\caption{Commutative diagram.}
\label{lemadosquatroo}
\end{figure}
 is commutative. Thus by induction, we can conclude  the inclusion map $i:F(M,k)\longrightarrow M^k$ induces a homomorphism $i_\ast:\pi_1F(M,k)\longrightarrow \pi_1M^k$ which is surjective and so we are done.
 \end{proof}
 
 \begin{remark}
 Lemma \ref{epimor} is actually a very special case of a general theorem of Golasi{\'n}ski,  Gon{\c{c}}alves and Guaschi in (\cite{golasinski2017homotopy}, Theorem 3.2). Also, it can be proved using braids (\cite{goldberg1974exact}, Lemma 1).
 \end{remark}
 
 \noindent \textit{Proof of Theorem \ref{theor4}.} The case $dim~ M=1$ is straightforward, so we assume that  $dim~M\geq 2$. If $\pi_1(M)=0$ then the result follows easily from the Proposition  \ref{cor2}. If $\pi_1(M)\neq 0$ then $\pi_1(M^k)\neq 0$ and by Lemma \ref{epimor} 
 \[i_\ast:\pi_1(F(M,k))\longrightarrow \pi_1(M^k)\] is an epimorphism. Thus  $\pi_1(F(M,k))\neq 0$ and $F(M,k)$ is not weak contractible. Therefore, $F(M,k)$ is not contractible.
 \begin{flushright}
 $\square$
 \end{flushright}

 \section{Lusternik-Schnirelmann category and topological complexity}
 As applications of our results, in this section, we will calculate the L-S category and topological complexity for the (pointed) loop space $\Omega F(M,k)$  and the  suspension $\Sigma F(M,k)$.
 
 Here we follow a definition of category, one greater than category given in  \cite{cornea2003lusternik}.
 
\begin{definition}\label{category-tc}
 We say that the Lusternik-Schnirelmann category or category of a topological space $X$, denoted $cat(X)$, is the least integer $m$ such that $X$ can be covered with $m$ open sets, which are all contractible within $X$. If no such $m$ exists we will set $cat(X)=\infty$.
\end{definition}

Let $PX$ denote the space of all continuous paths $\gamma: [0,1] \longrightarrow X$ in $X$ and  $\pi: PX \longrightarrow X \times X$ denotes the
map associating to any path $\gamma\in PX$ the pair of its initial and end points $\pi(\gamma)=(\gamma(0),\gamma(1))$. Equip the path space $PX$ with the compact-open topology. 

\begin{definition}\cite{farber2003topological}
  The \textit{topological complexity} of a path-connected space $X$, denoted by $TC(X)$, is the least integer $m$ such that the Cartesian product $X\times X$ can be covered with $m$ open subsets $U_i$, \begin{equation*}
        X \times X = U_1 \cup U_2 \cup\cdots \cup U_m 
    \end{equation*} such that for any $i = 1, 2, \ldots , m$ there exists a continuous function $s_i : U_i \longrightarrow PX$, $\pi\circ s_i = id$ over $U_i$. If no such $m$ exists we will set $TC(X)=\infty$. 
\end{definition}

\begin{remark}\label{basic-rem}
For all path connected spaces $X$, the basic inequality that relate $cat$ and $TC$ is\begin{equation}
cat(X)\leq TC(X).
\end{equation} On the other hand, by (\cite{farber2003topological}, Theorem 5), for all path connected paracompact spaces $X$, $TC(X)\leq 2cat(X)-1$. It follows from the Definition \ref{category-tc} that we have $cat(X)=1$ if and only if $X$ is contractible. It is also easy to show that  $TC(X)=1$ if and only if $X$ is contractible.
\end{remark}

By Remark \ref{basic-rem} and Theorem \ref{theor4}, we obtain the following statement.
\begin{proposition}
  If $M$ is a connected finite dimensional topological manifold, then the Lusternik-Schnirelmann category and the topological complexity of $F(M,k)$ are at least 2, $\forall k\geq 2$.
\end{proposition}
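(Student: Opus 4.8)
The plan is to treat this proposition as a direct corollary of the Main Theorem, combining it with the elementary characterizations of the unit values of $cat$ and $TC$ recorded in Remark \ref{basic-rem}. No new homotopy-theoretic input is required; everything reduces to a contrapositive argument.

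First I would fix $k\geq 2$ and invoke Theorem \ref{theor4} to obtain that the configuration space $F(M,k)$ is not contractible (in fact it is never weak-contractible, but mere non-contractibility is all that is needed here). Next I would recall from Remark \ref{basic-rem} the two equivalences $cat(X)=1 \iff X \text{ is contractible}$ and $TC(X)=1 \iff X \text{ is contractible}$. Applying the contrapositive of each to $X=F(M,k)$ immediately yields $cat(F(M,k))\neq 1$ and $TC(F(M,k))\neq 1$; since both invariants take values in the positive integers together with $\infty$, this forces $cat(F(M,k))\geq 2$ and $TC(F(M,k))\geq 2$, as claimed.

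The one point that deserves care --- and which I regard as the only genuine obstacle --- is the well-definedness of $TC$ when $\dim M=1$. For a connected $1$-manifold, $M$ is either $\mathbb{R}$ or $\mathbb{S}^1$, and the resulting configuration space can fail to be path-connected (for instance $F(\mathbb{R},k)$ has $k!$ components, each contractible, and $F(\mathbb{S}^1,k)$ is disconnected for $k\geq 3$), so the path-connectedness hypothesis in the definition of $TC$ need not hold. I would handle this by observing that a disconnected space $X$ admits no continuous section of $\pi\colon PX\longrightarrow X\times X$ over any neighbourhood of a pair of points lying in distinct path-components, whence $TC(X)=\infty$; in particular the bound $TC(F(M,k))\geq 2$ is preserved. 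The $cat$ bound requires no such caveat, since $cat(X)=1\iff X$ is contractible holds for an arbitrary space: a single open set witnessing $cat(X)=1$ must be all of $X$ and contractible within $X$, i.e.\ $X$ itself is contractible. Thus the two inequalities hold uniformly in the dimension of $M$.
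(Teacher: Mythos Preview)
Your argument is essentially identical to the paper's, which simply says ``By Remark \ref{basic-rem} and Theorem \ref{theor4}'' and records the proposition as an immediate corollary. Your additional paragraph on the $1$-dimensional case (where $F(M,k)$ can be disconnected and $TC$ is formally defined only for path-connected spaces) is a point of rigour the paper does not address, and your resolution via $TC=\infty$ for disconnected spaces is correct.
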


Proposition \ref{fred} and Lemma \ref{homotopy-finite-type} we state in this section are known, they can be found in the paper by Frederick R. Cohen \cite{cohen1998lusternik}. Here $\Omega^j_0X$ denotes the component of the constant map in the $j^{th}$ pointed loop space of $X$.

\begin{proposition}\label{fred}(\cite{cohen1998lusternik}, Theorem 1)
 If $X$ is a simply-connected finite complex which is not contractible, then the Lusternik-Schnirelmann category of $\Omega^j_0X$ is infinite for $j\geq 1$.
\end{proposition}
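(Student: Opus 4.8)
The plan is to prove the stronger statement that the cup-length of $\Omega^j_0 X$ is infinite, and then invoke the standard lower bound for Lusternik--Schnirelmann category by cup-length. Recall that, in the normalization of Definition \ref{category-tc}, if $H^\ast(Y;\mathbb{F})$ contains positive-degree classes $u_1,\dots,u_n$ with $u_1\cup\cdots\cup u_n\neq 0$, then $\mathrm{cat}(Y)\geq n+1$. Hence it is enough to produce, for every $n$, a nonzero $n$-fold cup product of positive-degree classes in $H^\ast(\Omega^j_0 X;\mathbb{F})$ for a single fixed field $\mathbb{F}$; this forces $\mathrm{cat}(\Omega^j_0 X)=\infty$.

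The reduction I would use is that the passage from ``infinite-dimensional cohomology'' to ``infinite cup-length'' is automatic for loop spaces. Since $\Omega^j_0 X$ is the basepoint component of a loop space it is a connected $H$-space, and because $X$ is a finite complex its cohomology $H^\ast(\Omega^j_0 X;\mathbb{F})$ is a connected, graded-commutative Hopf algebra of finite type. Over a perfect field (in particular over $\mathbb{Q}$ or any $\mathbb{F}_p$) Borel's structure theorem writes such a Hopf algebra as a tensor product of monogenic Hopf algebras, i.e. of factors $\mathbb{F}[x]$, $\Lambda(x)$, and $\mathbb{F}_p[x]/(x^{p^s})$. If this tensor product is infinite-dimensional then either one factor is an infinite polynomial algebra $\mathbb{F}[x]$, in which case the powers $x^n$ are all nonzero, or there are infinitely many finite-dimensional factors, in which case the products $x_1\cup\cdots\cup x_n$ of generators taken from $n$ distinct factors are nonzero; in both cases the cup-length is infinite. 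Thus the entire problem reduces to the following claim: for a suitable field $\mathbb{F}$, the homology $H_\ast(\Omega^j_0 X;\mathbb{F})$ is infinite-dimensional.

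For $j=1$ this claim is exactly the phenomenon already exploited in Lemma \ref{homology-loop-space}. Choosing a field $\mathbb{F}$ with $\widetilde{H}_\ast(X;\mathbb{F})\neq 0$ (one exists because a simply-connected non-contractible complex has nontrivial reduced integral homology, which is then detected rationally or mod some prime), the Serre spectral sequence of the path-loop fibration $\Omega X\to P X\to X$ forces $H_\ast(\Omega X;\mathbb{F})$ to be nonzero in arbitrarily high degrees: the product of a top class of $X$ with a hypothetical top class of $\Omega X$ could neither be hit by nor support a differential, contradicting the contractibility of $PX$. Together with the previous paragraph this already settles the case $j=1$.

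The main obstacle is the claim for $j\geq 2$, and here the naive iteration of the $j=1$ argument breaks down: the space $\Omega X$ is no longer finite-dimensional, and finite-dimensionality was essential to produce the top class in Lemma \ref{homology-loop-space}. In fact the claim can fail rationally, since $\Omega^2 S^3$ is rationally equivalent to $S^1$, so $H_\ast(\Omega^2_0 S^3;\mathbb{Q})$ is finite-dimensional and $\mathbb{Q}$ is in general inadequate. The resolution I would adopt is to pass to mod-$p$ coefficients and use the structure of the homology of iterated loop spaces (Dyer--Lashof operations, following Araki--Kudo and Cohen--Lada--May): choosing a prime $p$ with $\widetilde{H}_\ast(X;\mathbb{F}_p)\neq 0$, a bottom-dimensional nonzero class $x\in \widetilde{H}_\ast(\Omega^j_0 X;\mathbb{F}_p)$ is acted on by the top operations, and for $p=2$ the iterated squares $x,x^2,x^4,\dots$ are all nonzero; this simultaneously shows that $H_\ast(\Omega^j_0 X;\mathbb{F}_2)$ is infinite-dimensional and exhibits a non-nilpotent class, closing the argument through the cup-length bound. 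Verifying that a suitable bottom class survives through the $j$-fold looping, and that the relevant Dyer--Lashof operation is nonzero on it, is the technical heart of the proof and is precisely where the homology theory of iterated loop spaces is indispensable.
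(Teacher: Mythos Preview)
The paper does not prove this proposition at all; it is simply quoted from Cohen's paper \cite{cohen1998lusternik} and used as a black box. So there is no ``paper's own proof'' to compare against, and your outline is actually more than the paper supplies.

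That said, your strategy is precisely the one Cohen uses in the cited reference: reduce to infinite cup-length via the Hopf-algebra structure on $H^\ast(\Omega^j_0 X;\mathbb{F})$, handle $j=1$ by the path--loop Serre spectral sequence argument (which is exactly Lemma~\ref{homology-loop-space} here), and for $j\geq 2$ invoke the Araki--Kudo / Dyer--Lashof operations on the mod-$p$ homology of iterated loop spaces to produce a non-nilpotent class (equivalently, a polynomial subalgebra). So methodologically you are on Cohen's track.

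Where your proposal is genuinely incomplete is the last paragraph, and you are honest about this. The bare assertion that ``the iterated squares $x,x^2,x^4,\dots$ are all nonzero'' for a bottom class $x\in\widetilde H_\ast(\Omega^j_0X;\mathbb{F}_2)$ is not automatic from $H$-space structure alone; it is exactly the content that requires the Cohen--Lada--May description of $H_\ast(\Omega^j\Sigma^j Y;\mathbb{F}_p)$ together with a map from a sphere detecting the bottom class. Two points worth tightening: (i) one must first produce a nonzero class in $\widetilde H_\ast(\Omega^j_0 X;\mathbb{F}_p)$, which amounts to knowing $X$ has a nontrivial homotopy group in degree $>j$ (true for a simply-connected, non-contractible finite complex, but this step should be stated); (ii) the non-vanishing of the iterated top Dyer--Lashof operation on that bottom class is what actually yields the polynomial generator, and this is where Cohen's argument lives. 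As written, your proposal correctly locates the difficulty but defers its resolution to the literature, which is acceptable as an outline but not as a self-contained proof.
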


\begin{lemma}\label{homotopy-finite-type}
Let $M$ be a simply-connected finite dimensional topological manifold with dimension at least 3. If $M$ has the homotopy type of a finite CW complex, then the configuration space $F(M,k)$ has the homotopy type of a finite CW complex, $\forall k\geq 1$.
\end{lemma}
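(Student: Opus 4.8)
The plan is to induct on $k$, using the Fadell--Neuwirth fibration $\pi_{k,k-1}\colon F(M,k)\to F(M,k-1)$ whose fibre is $F(M-Q_{k-1},1)=M-Q_{k-1}$, and to reduce the assertion that $F(M,k)$ has the homotopy type of a finite CW complex to the two conditions: (i) $F(M,k)$ is simply connected, and (ii) its integral homology $H_\ast(F(M,k);\mathbb{Z})$ is finitely generated in each degree and vanishes above some degree. Indeed, $F(M,k)$ is an open subset of the manifold $M^k$, hence is itself a topological manifold and so (being an ANR) has the homotopy type of a CW complex; for a simply connected CW complex, conditions (i)--(ii) force finite homotopy type, because Wall's finiteness obstruction then lives in $\widetilde{K}_0(\mathbb{Z})=0$ and automatically vanishes. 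Thus the whole problem becomes one of propagating simple connectivity and finitely generated, bounded homology up the configuration tower.

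For the base case $k=1$ we have $F(M,1)=M$, which is simply connected and of finite CW type by hypothesis. Assume the statement, together with (i) and (ii), for $F(M,k-1)$. The crucial use of the dimension hypothesis $m\geq 3$ is in the fibre: removing the finitely many points $Q_{k-1}$, each of codimension $m\geq 3$, does not change the fundamental group, so $M-Q_{k-1}$ is again simply connected; moreover, by the splitting $M-Q_{k-1}\simeq \bigvee \mathbb{S}^{m-1}\vee(M-V)$ of (\cite{husseini2002configuration}, Proposition 3.1) its homology is finitely generated and bounded. Since the base $F(M,k-1)$ is simply connected, the homotopy exact sequence of the fibration gives that $F(M,k)$ is simply connected, establishing (i).

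For (ii) I would feed the fibration into the Serre spectral sequence $E^2_{p,q}=H_p\bigl(F(M,k-1);H_q(M-Q_{k-1};\mathbb{Z})\bigr)\Rightarrow H_{p+q}(F(M,k);\mathbb{Z})$. By the inductive hypothesis the base homology is finitely generated in each degree and bounded, and by the previous paragraph so is the fibre homology; hence each $E^2_{p,q}$ is finitely generated and the $E^2$-page is concentrated in finitely many bidegrees. Convergence then yields that $H_\ast(F(M,k);\mathbb{Z})$ is finitely generated in each degree and vanishes above some degree, which is exactly (ii). Combining (i) and (ii) with the reduction of the first paragraph completes the induction.

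The step I expect to be the main obstacle is the reduction itself --- guaranteeing that finitely generated, bounded homology really does upgrade to \emph{finite} (not merely finitely dominated) homotopy type. This is where simple connectivity is essential: without it one would be left with a possibly nonzero Wall obstruction in $\widetilde{K}_0(\mathbb{Z}[\pi_1])$, and this is precisely why the hypothesis forces $\dim M\geq 3$ rather than $\geq 2$, so that every fibre $M-Q_{j}$ in the tower stays simply connected and the total spaces remain simply connected. A secondary point requiring care is the triviality of the local coefficient system in the spectral sequence, which again follows from simple connectivity of the base but must be checked at each stage of the induction.
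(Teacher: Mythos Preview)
Your argument is correct. Note, though, that the paper does not actually prove this lemma: it is stated as known, with a citation to Cohen \cite{cohen1998lusternik}, so there is no in-paper proof to compare against. Your induction up the Fadell--Neuwirth tower, feeding the fibration into the Serre spectral sequence to control $H_\ast(F(M,k);\mathbb{Z})$ and then invoking Wall's finiteness criterion (the obstruction lies in $\widetilde{K}_0(\mathbb{Z})=0$ since everything stays simply connected), gives a clean self-contained justification of exactly the kind one would expect the cited reference to contain.

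One minor streamlining: to verify that the fibre $M-Q_{k-1}$ has finitely generated homology vanishing above degree $m$, it is slightly more direct to use the long exact sequence of the pair $(M,M-Q_{k-1})$ together with excision, which yields $H_\ast(M,M-Q_{k-1})\cong\bigoplus^{\,k-1}\widetilde{H}_{\ast-1}(S^{m-1})$. This avoids appealing to the wedge splitting of \cite{husseini2002configuration} and in particular sidesteps any separate verification that $M-V$ has finitely generated homology. Your identification of the genuine reason for the hypothesis $\dim M\geq 3$ --- namely, to keep every fibre $M-Q_j$ and hence every total space simply connected, so that Wall's obstruction group is trivial --- is exactly right.
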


As a consequence of Theorem \ref{theor4} we can obtain Proposition \ref{fred} for configuration spaces.

\begin{theorem}\label{cat-and-tc-loop-config}
Let $M$ be a space which has the homotopy type of a finite CW complex. If $M$ is a simply-connected finite dimensional topological manifold with dimension at least 3, then the Lusternik-Schnirelmann category and the topological complexity of $\Omega^j_0 F(M,k)$ are infinite, $\forall k\geq 2, ~j\geq 1$.
\end{theorem}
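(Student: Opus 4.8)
The plan is to recognize $F(M,k)$ as a space to which Cohen's theorem, Proposition \ref{fred}, applies directly, and then to read off the statement for topological complexity from the resulting category computation via the basic inequality. Proposition \ref{fred} demands three properties of the space $X=F(M,k)$: that it be simply-connected, that it have the homotopy type of a finite CW complex, and that it be non-contractible. Two of these are already available: the finite CW type is precisely Lemma \ref{homotopy-finite-type} (which uses that $M$ is a simply-connected finite-dimensional manifold of dimension at least $3$ of the homotopy type of a finite complex), and non-contractibility is the Main Theorem, Theorem \ref{theor4}, valid for $k\geq 2$. Hence the only remaining point to establish is that $F(M,k)$ is simply-connected.

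For simple-connectivity I would argue by induction on $k$, proving the stronger statement that $F(N,k)$ is simply-connected for every simply-connected manifold $N$ (without boundary) of dimension at least $3$. The key elementary input is that removing finitely many points from such an $N$ again yields a simply-connected manifold of dimension at least $3$: any loop bounds a disk in $N$, and since the removed points have codimension at least $3$, a general-position argument moves the disk off them, so $\pi_1(N-Q_r)=\pi_1(N)=0$. The base case $k=1$ is $F(N,1)=N$. For the inductive step I feed the fibration $F(N-Q_1,k-1)\longrightarrow F(N,k)\longrightarrow N$ into its homotopy long exact sequence; the fibre $F(N-Q_1,k-1)$ is a configuration space of the simply-connected manifold $N-Q_1$ of dimension at least $3$, hence simply-connected by the inductive hypothesis, and since the segment $\pi_1(F(N-Q_1,k-1))\to\pi_1(F(N,k))\to\pi_1(N)$ is flanked by trivial groups we obtain $\pi_1(F(N,k))=0$. (Equivalently, this vanishing can be read off the iterated semidirect-product description of $\pi_1$ recorded in the proof of Lemma \ref{theor2}, every factor $\pi_1(N-Q_i)$ of which is now trivial.) Applying this with $N=M$ shows $F(M,k)$ is simply-connected.

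With $F(M,k)$ now known to be a simply-connected, non-contractible space of the homotopy type of a finite CW complex, Proposition \ref{fred} applies with $X=F(M,k)$ and yields $cat(\Omega^j_0F(M,k))=\infty$ for every $j\geq 1$. Finally, the basic inequality $cat(Y)\leq TC(Y)$ of Remark \ref{basic-rem}, applied to the path-connected space $Y=\Omega^j_0F(M,k)$, forces $TC(\Omega^j_0F(M,k))=\infty$ as well, which completes the argument for all $k\geq 2$ and $j\geq 1$. I expect the only real obstacle to be the verification of simple-connectivity; everything else is a matter of invoking the already-established results in the right order, and the one subtlety is to use the dimension-at-least-$3$ hypothesis exactly where it is needed, namely to keep the punctured manifolds $N-Q_r$ simply-connected.
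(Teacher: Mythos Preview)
Your proposal is correct and follows essentially the same approach as the paper: verify that $F(M,k)$ is simply-connected, of finite CW homotopy type (Lemma \ref{homotopy-finite-type}), and non-contractible (Theorem \ref{theor4}), then invoke Proposition \ref{fred} and finish with the inequality $cat\leq TC$ from Remark \ref{basic-rem}. The only difference is that you supply a detailed inductive argument for the simple-connectivity of $F(M,k)$, whereas the paper simply asserts this as a consequence of the hypotheses on $M$; your elaboration is correct and welcome, and the rest matches the paper line for line.
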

\begin{proof}
The assumptions that $M$ is a simply-connected finite dimensional topological manifold with dimension at least 3, imply the configuration space $F(M,k)$ is simply-connected. Furthermore, as $M$ has the homotopy type of a finite CW complex, the configuration space $F(M,k)$ also has the homotopy type of a finite CW complex by Lemma \ref{homotopy-finite-type}. Finally the configuration space $F(M,k)$ is not contractible by Theorem \ref{theor4}. Therefore we can apply Proposition \ref{fred} and conclude that the Lusternik-Schnirelmann category of $\Omega^j_0 F(M,k)$ is infinite, $\forall k\geq 2$. Moreover, by Remark \ref{basic-rem}, the topological complexity of $\Omega^j_0 F(M,k)$ is also infinite, $\forall k\geq 2$. 
\end{proof}

\begin{remark}
\begin{enumerate}
    \item In Theorem \ref{cat-and-tc-loop-config}, the assumption \textit{$M$ has the homotopy type of a finite CW complex} can be reduce to the assumption \textit{$M$ is a CW complex of finite type} (see \cite{wilkerson2006draft}). 
    \item By Theorem \ref{cat-and-tc-loop-config}, if $G$ is a simply-connected finite dimensional Lie group of finite type with dimension at least 3. Then the topological complexity $TC(\Omega F(G,k))=\infty,$ for any $k\geq 2$. In contrast, we will see that the topological complexity $TC(\Sigma F(G,k))=3<\infty,$ for any $k\geq 3$.
\end{enumerate}
\end{remark}

\begin{remark}
If $X$ is any topological space and $\Sigma X:=\dfrac{X\times [0,1]}{X\times \{0\}\cup X \times \{1\}}$ is the non-reduced suspension of the space $X$, it is well-known that $cat(\Sigma X)\leq 2$. We can cover $\Sigma X$ by two overlapping open sets (e.g, $q(X\times [0,3/4)$ and $q(X\times (1/4,1])$, where $q:X\times [0,1]\longrightarrow \Sigma X$ is the projection map), such that each open set is homeomorphic to the cone $CX:=\dfrac{X\times [0,1]}{X\times \{0\}}$, so is contractible in itself and thus it is contractible in the suspension $\Sigma X$. 
\end{remark}

\begin{lemma}\label{cat-suspension}
Let $X$ be a simply-connected topological space. If $X$ is not weak-contractible, then \begin{equation}
cat(\Sigma X)=2. 
\end{equation} 
\end{lemma}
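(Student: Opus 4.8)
The plan is to establish the two inequalities $cat(\Sigma X)\leq 2$ and $cat(\Sigma X)\geq 2$ separately. The upper bound $cat(\Sigma X)\leq 2$ is precisely the content of the remark immediately preceding this lemma, where $\Sigma X$ is covered by two open sets each contractible in itself, so only the lower bound remains. By the normalization recorded in Remark \ref{basic-rem} (namely $cat(Y)=1$ if and only if $Y$ is contractible), proving $cat(\Sigma X)\geq 2$ amounts to showing that $\Sigma X$ is \emph{not} contractible, and this is what I would argue. Since the upper bound already forces $cat(\Sigma X)\in\{1,2\}$, ruling out the value $1$ will pin it down to $2$.

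To detect non-contractibility I would pass to singular homology. First, because $X$ is simply-connected and not weak-contractible, there is a least integer $n\geq 2$ with $\pi_n(X)\neq 0$, so $X$ is $(n-1)$-connected. The absolute Hurewicz theorem (valid for any path-connected space, so no CW hypothesis is needed) then furnishes an isomorphism $\pi_n(X)\cong \widetilde{H}_n(X;\mathbb{Z})$, whence $\widetilde{H}_n(X;\mathbb{Z})\neq 0$. Next I would transport this class across the suspension: writing the unreduced suspension as the union of its two cones $CX$, which are contractible and meet in a copy of $X$, the Mayer--Vietoris sequence gives the suspension isomorphism $\widetilde{H}_{q+1}(\Sigma X;\mathbb{Z})\cong \widetilde{H}_q(X;\mathbb{Z})$ for every $q$. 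Taking $q=n$ yields $\widetilde{H}_{n+1}(\Sigma X;\mathbb{Z})\cong \widetilde{H}_n(X;\mathbb{Z})\neq 0$. As a contractible space has vanishing reduced homology in every degree, $\Sigma X$ cannot be contractible, so $cat(\Sigma X)\geq 2$, and combining with the upper bound gives $cat(\Sigma X)=2$.

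I expect the main (though mild) obstacle to be ensuring that non-weak-contractibility of $X$ yields nontrivial \emph{homology} rather than merely nontrivial homotopy, and this is exactly the point at which simple-connectedness is indispensable: without it one could have a nontrivial $\pi_1$ alongside vanishing reduced homology (an acyclic space), and the suspension isomorphism would then carry no information. With simple-connectedness in hand, Hurewicz bridges the gap cleanly, after which the suspension isomorphism and the normalization $cat=1\Leftrightarrow$ contractible complete the argument.
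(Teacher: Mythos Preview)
Your argument is correct and follows essentially the same route as the paper's proof: both use the suspension isomorphism (via Mayer--Vietoris for the two-cone cover of $\Sigma X$) together with the Hurewicz theorem to link non-weak-contractibility of the simply-connected space $X$ to nontrivial reduced homology of $\Sigma X$, and both identify simple-connectedness as the hypothesis needed to pass from homotopy to homology. The only cosmetic difference is that the paper runs the implication contrapositively (assume $\Sigma X$ weak-contractible, deduce $H_q(X;\mathbb{Z})=0$ for all $q\geq 1$, then invoke the Hurewicz-type result \cite[Corollary~4.33]{hatcher2002algebraic} to conclude $X$ is weak-contractible), whereas you argue directly by locating the first nonvanishing $\pi_n(X)$ and pushing it through Hurewicz and then the suspension isomorphism.
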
 
\begin{proof}
It is sufficient to prove that $\Sigma X$ is not weak-contractible and thus $cat(\Sigma X)\geq 2$. Since contractible implies weak-contractible. If $\Sigma X$ was weak-contractible then by the Mayer-Vietoris sequence for the open covering $\Sigma X=q(X\times [0,3/4)\cup q(X\times (1/4,1])$ we can conclude $H_q(X;\mathbb{Z})=0,\forall q\geq 1$. Thus by (\cite{hatcher2002algebraic}, Corollary 4.33) $X$ is weak-contractible (here we have used that $X$ is simply-connected\footnote{By Hatcher (\cite{hatcher2002algebraic}, Example 2.38) there exists nonsimply-connected acyclic spaces.}). It is a contradiction with the hypothesis. Therefore $\Sigma X$ is not weak-contractible.
\end{proof}

\begin{theorem}\label{cat-suspensio-config}
If $M$ is a simply-connected finite dimensional topological manifold with dimension at least 3, then \begin{equation} cat(\Sigma F(M,k))=2, \forall k\geq 2.
\end{equation}
\end{theorem}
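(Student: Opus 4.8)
The plan is to reduce everything to Lemma \ref{cat-suspension}, which asserts that $cat(\Sigma X)=2$ whenever $X$ is simply-connected and not weak-contractible. Thus the entire argument amounts to verifying that $X=F(M,k)$ satisfies these two hypotheses under the standing assumptions on $M$. Once both are checked, the conclusion $cat(\Sigma F(M,k))=2$ is immediate.

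First I would establish that $F(M,k)$ is simply-connected. This is exactly the point where the dimension hypothesis $\dim M\geq 3$ is used: via the Fadell--Neuwirth fibration, $\pi_1(F(M,k))$ is built up (as in the proof of Lemma \ref{theor2}) from the groups $\pi_1(M-Q_r)$ for various $r$. When $\dim M\geq 3$ the local model for deleting a point is $\mathbb{R}^m-\{0\}\simeq \mathbb{S}^{m-1}$ with $m\geq 3$, which is simply-connected, so removing finitely many points does not alter the fundamental group; combined with $\pi_1(M)=0$ this yields $\pi_1(M-Q_r)=0$ for every $r$, and hence $\pi_1(F(M,k))=0$. This is precisely the simple-connectivity already invoked in the proof of Theorem \ref{cat-and-tc-loop-config}, so I would simply cite that reasoning rather than reprove it.

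Next I would invoke Theorem \ref{theor4}: since $M$ is a connected finite dimensional topological manifold, $F(M,k)$ is never weak-contractible for $k\geq 2$. This supplies the second hypothesis of Lemma \ref{cat-suspension}. With $F(M,k)$ now known to be both simply-connected and not weak-contractible, applying Lemma \ref{cat-suspension} with $X=F(M,k)$ gives $cat(\Sigma F(M,k))=2$ for all $k\geq 2$, completing the proof.

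The step I expect to carry the most weight is the simple-connectivity of $F(M,k)$, since it is the only place where the hypothesis $\dim M\geq 3$ (as opposed to $\geq 2$) is genuinely needed; in dimension $2$ deleting a point produces a circle's worth of fundamental group and the conclusion would fail. Fortunately this is not a real obstacle here, as the required fact is already part of the paper's machinery (the Fadell--Neuwirth fibration together with the proof of Theorem \ref{cat-and-tc-loop-config}), so the whole argument is a short assembly of the two cited ingredients.
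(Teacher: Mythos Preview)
Your proposal is correct and follows essentially the same route as the paper: verify that $F(M,k)$ is simply-connected (using $\dim M\geq 3$ and the Fadell--Neuwirth fibration, exactly as in the proof of Theorem~\ref{cat-and-tc-loop-config}), verify that $F(M,k)$ is not weak-contractible, and then apply Lemma~\ref{cat-suspension}. The only cosmetic difference is that the paper attributes non-weak-contractibility to ``Theorem~\ref{theor2}'' rather than to Theorem~\ref{theor4} as you do; your citation is the more natural one, since Lemma~\ref{theor2} literally concerns punctured manifolds $M-Q_r$ with $r\geq 1$.
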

\begin{proof}
The arguments $M$ is a simply-connected finite dimensional topological manifold with dimension at least 3, imply the configuration space $F(M,k)$ is simply-connected. The configuration space $F(M,k)$ is not weak-contractible by Theorem \ref{theor2}. Therefore we can apply Lemma \ref{cat-suspension} and the Lusternik-Schnirelmann category of $\Sigma F(M,k))$ is two, $\forall k\geq 2$.
\end{proof}

We note that $\Sigma F(M,k)$ is paracompact because $F(M,k)$ is paracompact. 
\begin{corollary}
  If $M$ is a simply-connected finite dimensional topological manifold with dimension at least 3, then \begin{equation} 2\leq TC(\Sigma F(M,k))\leq 3, \forall k\geq 2.
\end{equation}
\end{corollary}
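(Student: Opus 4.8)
The plan is to bracket $TC(\Sigma F(M,k))$ between $2$ and $3$ by invoking the two general inequalities recorded in Remark~\ref{basic-rem}, feeding in the exact value of the category already computed in Theorem~\ref{cat-suspensio-config}. No new topology is needed; the corollary is a direct consequence of those two results together with the observation that $\Sigma F(M,k)$ is paracompact.

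First I would establish the lower bound. By Theorem~\ref{cat-suspensio-config}, the standing hypotheses on $M$ (simply connected, finite dimensional, $\dim M \geq 3$) give $cat(\Sigma F(M,k)) = 2$ for every $k \geq 2$. Since $\Sigma F(M,k)$ is path-connected, being the suspension of the path-connected space $F(M,k)$, the basic inequality $cat(X) \leq TC(X)$ from Remark~\ref{basic-rem} applies and yields
\[
2 = cat(\Sigma F(M,k)) \leq TC(\Sigma F(M,k)).
\]

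Next I would obtain the upper bound from the companion inequality $TC(X) \leq 2\,cat(X) - 1$, valid for path-connected paracompact spaces. The relevant hypothesis here is paracompactness: as noted immediately before the statement, $\Sigma F(M,k)$ is paracompact because $F(M,k)$ is paracompact (a subspace of the manifold $M^k$), and the non-reduced suspension of such a space remains paracompact. Substituting $cat(\Sigma F(M,k)) = 2$ into this inequality gives
\[
TC(\Sigma F(M,k)) \leq 2\cdot 2 - 1 = 3.
\]
Combining the two displays produces $2 \leq TC(\Sigma F(M,k)) \leq 3$, as claimed.

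There is no serious analytical obstacle in this argument; the only point requiring care is the verification of paracompactness, which is what licenses the upper bound $TC \leq 2\,cat - 1$. I expect this to be the most delicate ingredient, since paracompactness is not inherited by quotient constructions such as the suspension in full generality. One leans on the fact that $F(M,k)$ is metrizable (being an open subset of the manifold $M^k$) and that suspending by the compact interval $[0,1]$ preserves the relevant covering properties, so that $\Sigma F(M,k)$ is again paracompact. Everything else is a direct substitution of the value $cat = 2$ into the standard inequalities, so no further computation is required.
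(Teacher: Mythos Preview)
Your proof is correct and follows exactly the paper's approach: combine $cat(\Sigma F(M,k))=2$ from Theorem~\ref{cat-suspensio-config} with the two inequalities in Remark~\ref{basic-rem}, using paracompactness of $\Sigma F(M,k)$ for the upper bound. The paper's proof is in fact a single sentence citing precisely those two results; your additional discussion of why the suspension inherits paracompactness is more careful than the paper itself, which simply asserts it.
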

\begin{proof}
It follows from Remark \ref{basic-rem} and Proposition \ref{cat-suspensio-config}. 
\end{proof}

\begin{remark}
By Corollary \ref{tc-suspension-config} the topological complexity of the suspension of a configuration space is secluded in the range $2\leq TC(\Sigma F(M,k))\leq 3$ and any value in between can be taken (e.g. if $M=\mathbb{S}^m$ or $\mathbb{R}^m$ and $k=2$).
\end{remark}

Now we will recall the definition of the cup-length.

\begin{definition}\cite{cornea2003lusternik}
  Let $R$ be a commutative ring with unit and $X$ be a topological space. The \textit{cup-length} of $X$, denote $cup_R(X)$, is the least integer $n$ such that all $(n+1)-$fold cup products vanish in the reduced cohomology $\widetilde{H^\star}(X;R)$.
\end{definition}

\begin{remark}(\cite{cornea2003lusternik}, Theorem 1.5)\label{cup-length}
Let $R$ be a commutative ring with unit and $X$ be a topological space. It is well-known that \[1+cup_R(X)\leq cat(X).\] 
\end{remark}

On the other hand, it is easy to verify that the cup-length has the property listed below.

\begin{lemma}\label{prop-cup-length}
Let $\mathbb{K}$ be a field and $X,Y$ be topological spaces. Then
if $H^k(Y;\mathbb{K})$ is a finite dimensional $\mathbb{K}-$vector space for all $k\geq 0$. We have \begin{equation*}cup_{\mathbb{K}}(X\times Y)\geq cup_{\mathbb{K}}(X)+cup_{\mathbb{K}}(Y).\end{equation*}
\end{lemma}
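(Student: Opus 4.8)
The plan is to deduce the superadditivity of cup-length directly from the Künneth theorem over the field $\mathbb{K}$. Write $a=cup_{\mathbb{K}}(X)$ and $b=cup_{\mathbb{K}}(Y)$. The finite-dimensionality hypothesis on each $H^k(Y;\mathbb{K})$ is precisely what guarantees that the cohomology cross product
\[
\times\colon H^\ast(X;\mathbb{K})\otimes_{\mathbb{K}}H^\ast(Y;\mathbb{K})\longrightarrow H^\ast(X\times Y;\mathbb{K})
\]
is an isomorphism of graded $\mathbb{K}$-algebras, where the left-hand side carries the graded tensor-product multiplication and $\alpha\times\beta=p_X^\ast(\alpha)\smile p_Y^\ast(\beta)$ for the two coordinate projections $p_X,p_Y$ (see \cite{hatcher2002algebraic}). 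I would take this ring isomorphism as the single structural input of the proof.

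Next I would produce a nonzero product of length $a+b$ in the reduced cohomology of $X\times Y$. By the very definition of cup-length there are positive-degree classes $u_1,\dots,u_a\in\widetilde{H}^\ast(X;\mathbb{K})$ with $u:=u_1\smile\cdots\smile u_a\neq0$ and positive-degree classes $v_1,\dots,v_b\in\widetilde{H}^\ast(Y;\mathbb{K})$ with $v:=v_1\smile\cdots\smile v_b\neq0$ (when $a$ or $b$ is $0$ the corresponding product is the unit and the argument degenerates harmlessly). Pulling back along the projections yields the $a+b$ positive-degree classes $p_X^\ast u_1,\dots,p_X^\ast u_a,p_Y^\ast v_1,\dots,p_Y^\ast v_b$ in $\widetilde{H}^\ast(X\times Y;\mathbb{K})$. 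Since $p_X^\ast$ and $p_Y^\ast$ are ring homomorphisms, keeping the factors in this order and grouping them shows that their total product equals $p_X^\ast(u)\smile p_Y^\ast(v)=u\times v$, which under the Künneth isomorphism corresponds to $u\otimes v$.

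The concluding step is then immediate: because $\mathbb{K}$ is a field, the tensor product $u\otimes v$ of two nonzero homogeneous classes is again nonzero, hence $u\times v\neq0$, and we have exhibited a nonvanishing $(a+b)$-fold cup product of positive-degree classes in $X\times Y$. By the definition of cup-length this forces $cup_{\mathbb{K}}(X\times Y)\geq a+b$. If $a$ or $b$ is infinite, the same construction produces nonzero products of arbitrarily large length, so the inequality persists under the convention that $\infty\geq a+b$.

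As for where the difficulty lies, the only genuinely non-formal point is the invocation of Künneth as a \emph{ring} isomorphism, and in particular the role of the hypothesis that each $H^k(Y;\mathbb{K})$ is finite dimensional: this is exactly what makes the algebraic cross product both injective and surjective with no $\mathrm{Tor}$ or $\varprojlim{}^{1}$ corrections, and it is the reason the finiteness assumption is placed on $Y$ rather than on $X$. Everything after that—faithfulness of $\otimes_{\mathbb{K}}$ over a field and the compatibility of $p_X^\ast,p_Y^\ast$ with cup products—is routine.
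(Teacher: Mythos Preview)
Your argument is correct and is exactly the standard verification via the K\"unneth ring isomorphism; the paper itself does not supply a proof of this lemma, merely remarking that it is easy to verify, so there is nothing to compare against beyond noting that your approach is the expected one.
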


\begin{proposition}\label{tc-suspension-config}
If $G$ is a simply-connected finite dimensional Lie group of finite type with dimension at least 3. Then \begin{equation} TC(\Sigma F(G,k))= 3, \forall k\geq 3.
\end{equation} 
\end{proposition}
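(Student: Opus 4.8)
The plan is to prove $TC(\Sigma F(G,k)) = 3$ by establishing the matching lower and upper bounds separately. The upper bound $TC(\Sigma F(G,k)) \leq 3$ is already available for free: since $G$ is simply-connected of dimension at least $3$, Theorem \ref{cat-suspensio-config} gives $cat(\Sigma F(G,k)) = 2$, and then the general inequality $TC(X) \leq 2\,cat(X) - 1$ from Remark \ref{basic-rem} yields $TC(\Sigma F(G,k)) \leq 2\cdot 2 - 1 = 3$ (noting that $\Sigma F(G,k)$ is paracompact, as remarked before the Corollary). So the entire content of the proposition lies in the lower bound $TC(\Sigma F(G,k)) \geq 3$.

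To obtain the lower bound I would use the cohomological lower bound for topological complexity via zero-divisor cup-length, which is the standard sharpening of the cup-length estimate: for a path-connected space $X$, $TC(X)$ is at least one more than the length of the longest nonzero product of zero-divisors in $H^\ast(X\times X) \cong H^\ast(X)\otimes H^\ast(X)$ (a zero-divisor being a class in the kernel of the multiplication/diagonal map, i.e. of the form $a\otimes 1 - 1\otimes a$ up to the usual conventions). Concretely, I would aim to produce two zero-divisors whose product is nonzero, forcing $TC \geq 3$. The structure of $\Sigma F(G,k)$ makes this tractable: because $F(G,k)$ is simply-connected and not weak-contractible (Theorem \ref{theor2}/Proposition \ref{theor33}), the suspension $\Sigma F(G,k)$ has a nontrivial reduced cohomology concentrated in a way that all cup products among reduced classes vanish (suspensions have trivial cup products), so $cup_{\mathbb{K}}(\Sigma F(G,k)) = 1$ and the naive cup-length bound only gives $TC \geq cat = 2$. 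The extra unit of topological complexity must therefore come from genuine zero-divisor products in the \emph{tensor square}, which is exactly where Lemma \ref{prop-cup-length} enters.

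The key computation is to pick a nonzero reduced class $u \in \widetilde{H}^n(\Sigma F(G,k);\mathbb{K})$ (which exists since $\Sigma F(G,k)$ is not weak-contractible, guaranteeing nontrivial reduced homology hence, over a field $\mathbb{K}$, nontrivial reduced cohomology in some degree $n\geq 1$) and form the zero-divisor $\bar u = u\otimes 1 - (-1)^{|u|}\,1\otimes u \in H^\ast(\Sigma F(G,k)\times \Sigma F(G,k);\mathbb{K})$. The goal is to show $\bar u^2 = \bar u\cdot \bar u \neq 0$. Expanding $\bar u^2$ and using that $u^2 = 0$ (all cup products of reduced suspension classes vanish), the surviving term is a nonzero multiple of $u\otimes u$, and $u\otimes u \neq 0$ by the Künneth theorem over the field $\mathbb{K}$ together with the finite-type hypothesis on $G$ (which, via Lemma \ref{homotopy-finite-type}, ensures $F(G,k)$ and hence $\Sigma F(G,k)$ has finite-dimensional cohomology in each degree, so Lemma \ref{prop-cup-length} applies and the Künneth map is an isomorphism with no $\mathrm{Tor}$ obstruction). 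Hence $\bar u^2 \neq 0$, giving two nonvanishing zero-divisors with nonzero product and therefore $TC(\Sigma F(G,k)) \geq 3$.

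The main obstacle I anticipate is the sign/coefficient bookkeeping in verifying that the $u\otimes u$ term in $\bar u^2$ does not cancel. Over a field of characteristic $2$ the cross terms behave differently than in characteristic zero, and the vanishing of $u^2$ in a suspension together with the graded-commutativity signs must be tracked carefully so that the coefficient in front of $u\otimes u$ is genuinely a nonzero element of $\mathbb{K}$; in odd-degree cases one may need to choose the field $\mathbb{K}$ (e.g. $\mathbb{K} = \mathbb{Z}/2$) so that the relevant binomial/sign coefficient is nonzero, which is why working over a field rather than $\mathbb{Z}$ is essential. The finite-type hypothesis and Lemma \ref{prop-cup-length} are exactly what let me move the computation from $X$ to $X\times X$ without the $\mathrm{Tor}$ terms spoiling the product, so once the sign computation is pinned down the bound closes and, combined with the upper bound above, yields $TC(\Sigma F(G,k)) = 3$ for all $k\geq 3$.
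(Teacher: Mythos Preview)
Your strategy differs from the paper's: you aim for the lower bound $TC\geq 3$ via the zero-divisor cup-length, whereas the paper argues by contradiction using the Grant--Lupton--Oprea characterisation of spaces with $TC=2$ (they must be homotopy spheres), combined with the product splitting $F(G,k)\cong G\times F(G-\{e\},k-1)$ and the ordinary cup-length of that product. The paper's route exploits the group structure of $G$ and the hypothesis $k\geq 3$ explicitly; your route never uses either, which is already a warning sign.

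There is a genuine gap in your computation. With $u^{2}=0$ (as in any suspension) one has, for $n=|u|$,
\[
\bar u^{2} \;=\; -\bigl(1+(-1)^{n}\bigr)\, u\otimes u,
\]
so $\bar u^{2}=0$ whenever $n$ is odd, and over $\mathbb{K}=\mathbb{Z}/2$ one gets $\bar u^{2}=0$ in \emph{all} degrees. Thus your proposed remedy of passing to characteristic $2$ is exactly the wrong move: it kills the coefficient rather than rescuing it. This is not a hypothetical issue. Take $G$ contractible of odd dimension $d\geq 3$ (the case the paper treats separately via $F(G,k)\simeq F(\mathbb{R}^{d},k)$): the cohomology of $F(\mathbb{R}^{d},k)$ lies in degrees that are multiples of $d-1$, hence all even, so every nonzero reduced class of $\Sigma F(\mathbb{R}^{d},k)$ sits in odd degree and your single-class square vanishes identically over every field.

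The zero-divisor approach can be repaired, but not with one class. You need two linearly independent reduced classes $u,v$ (possibly of the same degree) and the product $\bar u\,\bar v=-\,u\otimes v-(-1)^{|u||v|}\,v\otimes u$, which is nonzero because $u\otimes v$ and $v\otimes u$ are independent in the K\"unneth tensor square. Establishing that $\widetilde H^{*}(\Sigma F(G,k);\mathbb{K})$ has rank at least $2$ is precisely where the hypotheses $k\geq 3$ and the Lie group structure (via $F(G,k)\cong G\times F(G-\{e\},k-1)$, together with Theorem~\ref{theor4} applied to the second factor) would enter; without that input your argument, as written, does not close.
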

\begin{proof}
We will assume that $G$ is not contractible, the case $G$ is contractible follows easily because $F(G,k)$ is homotopy equivalent to $F(\mathbb{R}^d,k)$, where $d=dim(G)$ (see \cite{zapataespaccos}, pg. 118). By Corollary \ref{tc-suspension-config} it is sufficient to prove that $TC(\Sigma F(G,k))\neq 2$. 
If $TC(\Sigma F(G,k))=2$ then, by (\cite{grant2013spaces}, Theorem 1), we have  $\Sigma F(G,k)$ is homotopy equivalent to some (odd-dimensional) sphere. Then $F(G,k)$ is homotopy equivalent to some (even-dimensional) sphere and thus $cat(F(G,k))=2$. On the other hand, $F(G,k)$ is homeomorphic to the product $G\times F(G-\{e\},k-1)$ because $G$ is a topological group. Then $2=cat(G\times F(G-\{e\},k-1))\geq cup_{\mathbb{K}}(G\times F(G-\{e\},k-1))+1$ for any field $\mathbb{K}$ (see Remark \ref{cup-length}). Furthermore, Lemma \ref{prop-cup-length} implies that $cup_{\mathbb{K}}(G\times F(G-\{e\},k-1))\geq cup_{\mathbb{K}}(G)+cup_{\mathbb{K}}(F(G-\{e\},k-1))\geq 1+1=2$ (here we note that $k-1\geq 2$ and by Theorem \ref{theor4} we have the cup length $cup_{\mathbb{K}}(F(G-\{e\},k-1))\geq 1$). Thus, $2=cat(G\times F(G-\{e\},k-1))\geq 3$ which is a contradiction.

\end{proof}


\bibliographystyle{plain}

\end{document}